\documentclass[11pt]{article}

\usepackage[T1]{fontenc}
\usepackage{hyperref, amsfonts, amsmath, amssymb, amsthm, fullpage, bm, mathtools, microtype, mleftright}
\usepackage[noabbrev, capitalize]{cleveref}

\newtheorem{theorem}{Theorem}
\newtheorem{lemma}[theorem]{Lemma}
\newtheorem{proposition}[theorem]{Proposition}

\AddToHook{env/lemma/begin}{\crefalias{theorem}{lemma}}
\AddToHook{env/proposition/begin}{\crefalias{theorem}{proposition}}

\newtheorem*{claim*}{Claim}

\newenvironment{claimproof}[1][Proof]
  {\begin{proof}[#1]}
  {\end{proof}}

\DeclareMathOperator{\tr}{tr}
\DeclareMathOperator{\rank}{rank}

\DeclareMathOperator{\range}{im}
\DeclareMathOperator{\proj}{Proj}
\DeclareMathOperator{\sgn}{sgn}
\DeclarePairedDelimiter\abs{\lvert}{\rvert}
\DeclarePairedDelimiter\norm{\lVert}{\rVert}
\DeclarePairedDelimiter\ip{\langle}{\rangle}
\DeclarePairedDelimiter\floor{\lfloor}{\rfloor}
\DeclarePairedDelimiter\ceil{\lceil}{\rceil}

\newcommand{\dset}[2]{\left\{{#1}\colon{#2}\right\}}
\newcommand{\sset}[1]{\left\{{#1}\right\}}

\newcommand{\eps}{\varepsilon}
\newcommand{\qf}[2]{#2^\T #1 #2}

\newcommand{\R}{\mathbb{R}}
\newcommand{\E}{\mathbb{E}}
\newcommand{\T}{\intercal}

\newcommand{\cube}{[-1,1]^n}
\newcommand{\csec}{[-1,1]^n \cap K}
\newcommand{\qube}{\sset{\pm 1}^n}
\newcommand{\zube}{\sset{0,\pm 1}^n}
\newcommand{\qsec}{\zube \cap K}

\newcommand{\quadand}{\quad\text{and}\quad}
\newcommand{\quadas}{\quad\text{almost surely}}

\title{Unbalancing unit vectors}
\author{
  Zilin Jiang\thanks{School of Mathematical and Statistical Sciences, and School of Computing and Augmented Intelligence, Arizona State University, Tempe, AZ 85281, USA. Email: \texttt{zilinj@asu.edu}. Supported in part by the Simons Foundation through its Travel Support for Mathematicians program and by U.S.\ taxpayers through NSF grant 2451581.}
  \and Jeck Lim\thanks{Hausdorff Center for Mathematics, University of Bonn, 53115 Bonn, Germany. Email: \texttt{jlim1@uni-bonn.de}.}
  \and Skand Parvatikar\thanks{Department of Mathematics, Statistics, and Computer Science, University of Illinois Chicago, Chicago, IL 60607, USA. Email: \texttt{sparvati@asu.edu}.}
}
\date{}

\begin{document}

\maketitle

\begin{abstract}
  We show that for every $n$ unit vectors $v_1, \dots, v_n$ in the $d$-dimensional Euclidean space, there exist signs $\varepsilon_1, \dots, \varepsilon_n \in \{\pm 1\}$ such that $\lVert \varepsilon_1 v_1 + \dots + \varepsilon_n v_n \rVert \ge \sqrt{2n - d}$, and we characterize the equality cases.
\end{abstract}

\section{Introduction} \label{sec:intro}

In 1963, Dvoretzky \cite{D63} asked how well one can balance $n$ unit vectors in a normed vector space $E$: what is the smallest constant $c = c(E,n)$ such that every choice of unit vectors $v_1, \dots, v_n \in E$ admits signs $\eps_1, \dots, \eps_n \in \sset{\pm1}$ with
\[
  \norm{\eps_1v_1 + \dots + \eps_n v_n} \le c?
\]
For $\R^d$ equipped with the Euclidean norm, Spencer \cite{S81} proved that $c(\R^d, n) \le \sqrt{d}$, which is sharp when $n \ge d$ and $n - d$ is even. For the related problem of balancing vectors of norm at most $1$ in general finite-dimensional normed spaces, see B\'ar\'any and Grinberg \cite{BG81}.

A related question is the Koml\'os conjecture, which asks whether vectors of Euclidean norm at most $1$ always admit a signed sum whose $\ell_\infty$-norm is bounded by a universal constant. Guo, Fang, and Lu \cite{GFL26} recently proved this conjecture, and Karingula and Lovett \cite{KL26} subsequently gave an elementary proof.

In this paper, we focus on unbalancing unit vectors in Euclidean spaces --- given $n$ and $d$, determine the largest constant $C = C(\R^d, n)$ such that for every $n$ unit vectors $v_1, \dots, v_n$ in $\R^d$, there exist signs $\eps_1, \dots, \eps_n \in \sset{\pm1}$ such that
\[
  \norm{\eps_1v_1 + \dots + \eps_n v_n} \ge C.
\]
The same definition gives $C(E,n)$ for any $d$-dimensional normed space $E$. Ambrus and Grundbacher \cite{AG25} proved that $C(E,n)\ge\ceil{n/d}$, with equality when $E=\R^d$ is equipped with the infinity norm.

A simple probabilistic argument gives $C(\R^d,n) \ge \sqrt{n}$: the expected squared norm of  a sum with independent random signs is $n$. An orthonormal set attains this bound when $n \le d$. For $n\ge d$, the known bounds \cite[Proposition 2]{AG25} give
\[
  \sqrt{\frac{2}{\pi}}\,\frac{n}{\sqrt d}
  \le C(\R^d,n) \le \frac{n}{\sqrt d}.
\]
As soon as $n=d+1$, the unit vectors become linearly dependent. Ambrus and Nietert \cite{AN19} conjectured that, for every $d\ge2$,
\[
  C(\R^d,d+1)=\sqrt{d+2},
\]
with equality precisely for the union of unit vectors forming a centered regular simplex in a nonzero even-dimensional subspace $V$ and an orthonormal basis of $V^\perp$, up to \emph{switching}, that is, replacing some vectors $v_i$ by $-v_i$.\footnote{Ambrus and Nietert credit Alexandr Polyanskii with correcting an earlier version of the conjecture in \cite{BFGK18}.} This configuration attains the proposed bound, so the conjecture asserts both its optimality and the characterization of all extremizers.

Ambrus and Gonz\'alez Merino \cite{AG21} reiterated the conjecture and proved the lower bound when $d$ is even and the vectors sum to zero. Fu, Wang, and Yan \cite[Proposition 2.6]{FWY23} settled the case $d=3$, including the equality characterization.

We resolve both parts of the conjecture and extend the lower bound and equality characterization to general $n\ge d$. We begin with the lower bound.

\begin{theorem} \label{thm:main}
  For every $n$ unit vectors $v_1, \dots, v_n \in \R^d$ with $n \ge d$, there exist signs $\eps_1, \dots, \eps_n \in \sset{\pm1}$ such that
  \[
    \norm*{\eps_1 v_1 + \dots + \eps_n v_n} \ge \sqrt{2n-d}.
  \]
\end{theorem}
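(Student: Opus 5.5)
The plan is to take a sign vector maximizing the norm and extract enough first- and second-order optimality conditions from flips to force $\norm{u}^2 \ge 2n-d$, where $u$ is the maximal sum. Concretely, choose $\eps \in \qube$ maximizing $\norm{\sum_i \eps_i v_i}$, put $w_i = \eps_i v_i$, $u = \sum_i w_i$, $a_i = \ip{w_i,u}$, and let $W$ be the $d\times n$ matrix with columns $w_i$. Its kernel $K = \ker W$ has dimension at least $n-d$, and $\norm{u}^2 = \sum_i a_i$. I have not found how to finish, and I say below exactly where the argument stalls.

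The basic tool is a randomized flip. Fix $z \in [0,1]^n$ and flip each $i$ independently with probability $z_i$. Expanding,
\[
  \E\norm*{u - 2\textstyle\sum_{i\in S} w_i}^2 = \norm{u}^2 - 4\ip{u,Wz} + 4\norm{Wz}^2 + 4\textstyle\sum_i z_i(1-z_i),
\]
and maximality of $\eps$ makes this at most $\norm{u}^2$. If $Wz = s\,u$, this gives
\[
  \norm{u}^2 \ge \frac{\sum_i z_i(1-z_i)}{s(1-s)}.
\]
Two special cases are worth recording. Single flips, i.e.\ $z$ a basis vector, give the local conditions $a_i \ge 1$. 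Taking $z = \tfrac12\mathbf 1$ recovers the bound $n$.

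To use $K$, substitute $z = \tfrac12(1+y)$ with $y = \mathbf 1 - (1-r)q$, and let $r \to 1$. Here $q \ge 0$ and $q - \mathbf 1 \in K$, so $Wq = u$. In the limit the inequality becomes an LP bound:
\[
  \norm{u}^2 \ge \max\dset{\mathbf 1^\T q}{q \ge 0,\ q-\mathbf 1 \in K}.
\]
The feasible set is the nonnegative part of an affine subspace of dimension at least $n-d$ through $\mathbf 1$. A vertex $q$ of $[0,2]^n \cap (\mathbf 1 + K)$ has at least $n-d$ coordinates in $\sset{0,2}$, and so does its reflection $2\cdot\mathbf 1 - q$. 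The hope is to choose between them, or to use unbounded directions of $K \cap \R^n_{\ge 0}$, to reach $\mathbf 1^\T q \ge n + (n-d) = 2n-d$.

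The main obstacle is exactly this last step, and it fails as stated. If $K \perp \mathbf 1$, then every feasible $q$ has $\mathbf 1^\T q = n$, so the LP alone gives only $n$. In that case $\mathbf 1$ lies in the row space of $W$, and more information is needed. My first attempt would be to combine the LP bound with the pair-flip conditions
\[
  a_i + a_j \ge 2 + 2\ip{w_i,w_j},
\]
and more generally with the full family
\[
  \mathbf 1_S^\T G\,\mathbf 1_S \le \textstyle\sum_{i\in S} a_i \quad\text{for all } S,
\]
where $G = W^\T W$. The idea is to show that when $K \perp \mathbf 1$ the excesses $a_i - 1$ must themselves sum to at least $n-d$, for instance by averaging the subset inequalities over $S$ weighted by a vertex of $\csec$ and using that $G$ has eigenvalue $0$ with multiplicity at least $n-d$. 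I would test the whole scheme on the conjectured extremal configuration (a regular simplex plus an orthonormal complement), where every inequality used must be tight.
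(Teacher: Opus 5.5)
Your proposal does not prove the theorem. You say yourself that the last step is missing, and the limiting LP bound $\norm{u}^2\ge\max\dset{\mathbf 1^\T q}{q\ge0,\ q-\mathbf 1\in K}$ cannot reach $2n-d$ in general. For example, take $v_1=v_2$ in $\R^1$ with the maximizer $\eps=(1,1)$. Then $K=\ker W$ is spanned by $(1,-1)\perp\mathbf 1$, so the LP gives $2$, while you need $3$.

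The underlying issue is where you look in the cube. Your randomized-flip expansion, for arbitrary $z\in[0,1]^n$ and $x=\mathbf 1-2z$, is exactly the identity $\E\norm{\sum_i\eta_iw_i}^2=n+\qf{M'}{x}$. Here the $\eta_i$ are independent signs with $\E\eta_i=x_i$, and $M'=W^\T W-I_n$ is hollow. Switching by $D=\mathrm{diag}(\eps_1,\dots,\eps_n)$ maps $M=G-I_n$ to $M'=DMD$ and preserves the cube. So the theorem is equivalent to exhibiting one point $x\in\cube$ with $\qf{M}{x}\ge n-d$; independent rounding via the same identity then produces the signs. No maximizer and no optimality conditions are needed.

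Restricting to $z$ with $Wz\parallel u$ and letting $r\to1$ keeps only a first-order condition at the vertex $\pm\mathbf 1$, which is why the bound stalls at $n$. Averaging the subset inequalities does not change this: whatever weights you use, you still have to find where in the cube the form $\qf{M'}{x}$ reaches $n-d$. Your suggested place, a vertex of $[-1,1]^n\cap\ker W$, is the wrong one, since there $\qf{M'}{x}=\norm{Wx}^2-\norm{x}^2=-\norm{x}^2\le0$.

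The missing idea is to work in the positive eigenspace of $M$, not in its $-1$ eigenspace.
\begin{itemize}
\item Since $M$ is hollow, $\tr M=0$.
\item $\ker G$ (which equals $D\ker W$) is the $-1$ eigenspace of $M$, of dimension $n-\rank G\ge n-d$.
\item Hence, if $M=Q+R$ is the split into positive and negative parts, $\tr Q=-\tr R\ge n-d$.
\item Since $\range Q\subseteq\ker R$, we have $\qf{M}{x}=\qf{Q}{x}$ for $x\in\range Q$.
\end{itemize}
What remains is to find $x\in\cube\cap\range Q$ with $\qf{Q}{x}\ge\tr Q$. This is the substantive step, and nothing in your outline supplies it.

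The paper gets it from Ball and Prodromou's combinatorial version of Vaaler's cube slicing theorem. For every subspace $K'$ there is a random vector $V$ supported on $\cube\cap K'$ with $\E[VV^\T]\succeq\proj_{K'}$. Taking $K'=\range Q$ and using that $Q$ is positive semidefinite gives $\E[\qf{Q}{V}]=\tr(Q\,\E[VV^\T])\ge\tr Q$, so some point of the section works. Your own flip identity, used as a rounding step, then turns that point into signs $\delta\in\qube$ with $\norm{\sum_i\delta_iv_i}^2\ge n+\tr Q\ge 2n-d$.
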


When $n=d+1$, this gives the conjectured lower bound $\sqrt{d+2}$. To see when the bound is sharp more generally, we combine regular simplices in mutually orthogonal even-dimensional subspaces with an orthonormal set.

\begin{proposition} \label{prop:equalities}
  Let $n \ge d$ be positive integers, and let
  \[
    \R^d=V_0\oplus V_1\oplus\dots\oplus V_{n-d}
  \]
  be an orthogonal decomposition such that $V_1,\dots,V_{n-d}$ have positive even dimensions, with $V_0$ possibly zero dimensional. If $v_1,\dots,v_n$ consist of an orthonormal basis of $V_0$ and the vertices of a centered regular simplex inscribed in the unit sphere of $V_i$ for each $i \in \sset{1, \dots, n-d}$, then
  \[
    \norm*{\eps_1 v_1 + \dots + \eps_n v_n} \le \sqrt{2n-d}, \text{ for every } \eps_1, \dots, \eps_n \in \sset{\pm1}.
  \]
\end{proposition}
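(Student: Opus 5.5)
By orthogonality of the subspaces, for any signs the squared norm of the signed sum splits as the sum of squared norms of the contributions from each $V_i$:
\[
  \norm*{\eps_1 v_1 + \dots + \eps_n v_n}^2 = \norm*{\textstyle\sum_{v_j \in V_0} \eps_j v_j}^2 + \sum_{i=1}^{n-d} \norm*{\textstyle\sum_{v_j \in V_i} \eps_j v_j}^2 .
\]
The orthonormal part contributes exactly $\dim V_0$ for every choice of signs. It will therefore suffice to show that a centered regular simplex in a space of even dimension $m$ (so with $m+1$ vertices) has every signed sum of squared norm at most $m+2$. Summing these bounds, the total is at most
\[
  \dim V_0 + \sum_{i=1}^{n-d} (\dim V_i + 2) = d + 2(n-d) = 2n-d,
\]
which is the claim. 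I also check the vector count: $\dim V_0 + \sum_i (\dim V_i + 1) = d + (n-d) = n$, so the configuration is consistent.

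For the simplex bound, let $u_0,\dots,u_m$ be the vertices, so that $\norm{u_j}=1$ and $\ip{u_j,u_k} = -1/m$ for $j\ne k$. Take signs $\delta_j$, and let $p$ be the number of plus signs and $q=m+1-p$ the number of minus signs. Expanding the square gives
\[
  \norm*{\textstyle\sum_j \delta_j u_j}^2 = (m+1) - \frac{1}{m}\sum_{j\ne k}\delta_j\delta_k = (m+1) - \frac{1}{m}\Bigl(\bigl(\textstyle\sum_j\delta_j\bigr)^2 - (m+1)\Bigr).
\]
Since $\sum_j \delta_j = p-q$, this equals
\[
  \frac{(m+1)^2 - (p-q)^2}{m}.
\]
Because $m$ is even, $m+1$ is odd, so $p-q$ is odd and $(p-q)^2 \ge 1$. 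Hence the squared norm is at most $((m+1)^2-1)/m = m+2$, as needed.

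The only step that needs care is the parity observation in the simplex computation: evenness of $m$ forces $|p-q|\ge 1$. This is exactly where the hypothesis that each $V_i$ has even dimension is used; for odd $m$ a balanced choice of signs would give $(m+1)^2/m > m+2$. The rest is direct computation.
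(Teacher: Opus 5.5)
Your proof is correct and matches the paper's argument: split the squared norm over the orthogonal subspaces, note that the orthonormal part always contributes $\dim V_0$, and use the identity $\bigl((m+1)^2-(\sum_j\delta_j)^2\bigr)/m$ for each simplex. The parity of $m+1$ then forces $\lvert\sum_j\delta_j\rvert\ge 1$, which gives the bound $m+2$.
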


Taking $n-d$ mutually orthogonal equilateral triangles and an orthonormal set of $3d-2n$ additional vectors shows that for every $d$, and every $n \in \sset{d, \dots, \floor{3d/2}}$,
\[
  C(\R^d, n) = \sqrt{2n-d}.
\] Our final result shows that the configurations in \cref{prop:equalities} exhaust the equality cases, up to switching.

\begin{theorem} \label{thm:equalities}
  Let $v_1, \dots, v_n \in \R^d$ be unit vectors with $n \ge d$. If
  \[
    \norm*{\eps_1 v_1 + \dots + \eps_n v_n} \le \sqrt{2n-d}, \text{ for every } \eps_1, \dots, \eps_n \in \sset{\pm1},
  \]
  then, after switching, the vectors form a configuration described in \cref{prop:equalities}.
\end{theorem}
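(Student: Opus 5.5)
The plan is to reduce \cref{thm:equalities} to an \emph{indecomposable} case by a sequence of peeling arguments, each of which uses \cref{thm:main} as a black box, and then to classify the indecomposable extremal configurations directly. Write $G=(\ip{v_i,v_j})_{i,j}$ for the Gram matrix and $M=\max_{\eps\in\qube}\qf{G}{\eps}$. By \cref{thm:main} and the hypothesis, $M=2n-d$.

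First I show that the $v_i$ span $\R^d$. If they spanned a subspace of dimension $d'<d$, then \cref{thm:main} applied inside that subspace would give $M\ge 2n-d'>2n-d$, a contradiction. (When $d'<d\le n$ the hypothesis $n\ge d'$ of \cref{thm:main} holds automatically.)

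Next, let $I_1,\dots,I_m$ be the connected components of the graph on $[n]$ in which $i\sim j$ whenever $\ip{v_i,v_j}\neq0$. Let $W_k$ be the span of $\sset{v_i\colon i\in I_k}$, with $n_k=\abs{I_k}$ and $d_k=\dim W_k$. The $W_k$ are mutually orthogonal, and by the previous step $\sum_k d_k=d$. The quadratic form splits, so $M=\sum_k M_k$, where $M_k$ is the maximum for the $k$-th block.
\begin{itemize}
  \item If $n_k<d_k$ then $n_k=d_k$ is forced, since $n_k$ vectors span $W_k$. In that case $M_k\ge n_k=2n_k-d_k$ by the random-sign bound.
  \item Otherwise $M_k\ge 2n_k-d_k$ by \cref{thm:main}.
\end{itemize}
Summing gives $M\ge 2n-d$, so equality forces $M_k=2n_k-d_k$ for every $k$. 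A component with $n_k=d_k$ consists of $d_k$ linearly independent vectors with connected orthogonality graph. Unless it is a single vector, it contains two non-orthogonal vectors, and I would show that then the maximum strictly exceeds $n_k$. To do this, choose signs on a spanning tree so that every tree edge contributes $+2\abs{\ip{v_i,v_j}}$, and average over the remaining freedom. Hence these components are singletons, and they assemble into the orthonormal basis of $V_0$.

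It remains to treat an indecomposable block, which after relabeling I take to be the whole configuration. Here $n>d$, the orthogonality graph is connected, and $M=2n-d$. The goal is to show that $n=d+1$, that $d$ is even, and that after switching $G=\frac{d+1}{d}I-\frac1dJ$. That last form is exactly the Gram matrix of a centered regular simplex.

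For this I would analyze the maximizers. If $\eps$ attains $M$ and $s=\sum_j\eps_jv_j$, then flipping $\eps_i$ cannot increase the norm, which gives the local condition $\eps_i(G\eps)_i\ge1$ for all $i$, with $\sum_i\eps_i(G\eps)_i=2n-d$. I would re-run whatever averaging or convexity argument underlies \cref{thm:main} and record where its inequalities must be tight. I expect tightness to say roughly two things:
\begin{itemize}
  \item the set of maximizers is rich, for instance closed under suitable flips;
  \item $G$ has the extreme spectral shape permitted by rank $d$, namely $G$ acts as a multiple of the identity on its range, so $G=\frac nd\,\proj_{\range G}$.
\end{itemize}
Combining these with connectivity should force every off-diagonal entry to satisfy $\abs{\ip{v_i,v_j}}=c$ for a constant $c$, with signs switchable to be negative. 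The condition $G^2=\frac ndG$ then pins down $n=d+1$ and $c=1/d$. Finally, a direct computation shows that the maximum for a $(d+1)$-point simplex equals $d+2$ only when $d$ is even; for odd $d$ the balanced split is unavailable and the maximum is larger.

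The main obstacle is this last step: extracting rigid structure from tightness, without yet knowing the exact chain of inequalities in the proof of \cref{thm:main}. If tightness turns out to give only the local conditions, the fallback is induction on $n-d$. One would remove a vector $v_i$, project the others onto $v_i^\perp$ and renormalize, and use \cref{thm:main} in dimension $d-1$ to show that equality propagates to a smaller configuration, which forces the simplex structure.
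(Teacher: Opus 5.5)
\paragraph{Verdict: the reductions are sound, but the indecomposable case is not proved.}
Your reductions are fine and mirror the paper's first steps:
\begin{itemize}
  \item the rank argument via \cref{thm:main} inside the span;
  \item splitting the maximum over the orthogonal components of the non-orthogonality graph;
  \item showing that components with $n_k=d_k$ are singletons (the paper's set $C_0$).
\end{itemize}
Two small repairs are needed. Your first bullet should read $n_k\le d_k$. And once signs are fixed on a spanning tree, nothing is left to average over, while non-tree edges may contribute negatively. Instead, fix the signs of a single non-orthogonal pair and randomize the rest.

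The genuine gap is the indecomposable block. That is the whole content of the theorem, and there you give expectations rather than an argument. What you expect from tightness is also not what tightness yields. Write $M=G-I_n$, as the paper does (not your maximum). Equality in the proof of \cref{lem:main} only forces $\tr Q=n-d$, i.e.\ every negative eigenvalue of $M$ equals $-1$; equivalently, $G$ has no eigenvalue in $(0,1)$. This says nothing about sign patterns, equal moduli, or $n_k=d_k+1$. The identity $G=\frac nd\proj_{\range G}$ on a block is only known a posteriori, from the final classification.

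Your fallback does not work either. Let $P$ be the projection onto $v_i^\perp$. The hypothesis does give $\norm{\sum_{j\ne i}\eps_jPv_j}^2\le 2(n-1)-(d-1)$ for all signs. But the $Pv_j$ are not unit vectors. The maximal signed-sum norm is nondecreasing in each scaling factor, so rescaling them up to unit length can only increase it. Hence extremality does not pass to the renormalized configuration.

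\paragraph{The missing ingredients.}
You need a way to produce a rich family of maximizers with a known second-moment matrix, and then to turn local optimality into global linear constraints. The paper does this in four steps.
\begin{enumerate}
  \item[(i)] \emph{Equality in Ball--Prodromou} (\cref{lem:extremal}). If $\qf{Q}{x}\le\tr Q$ on $\csec$, then the random vector $V$ of \cref{thm:bp1} satisfies $\E[VV^\T]=\proj_K$ exactly. Strict convexity on $K$ together with $\E\norm{V}^2=\dim K$ forces $V$ onto $\qsec$, and every support point maximizes $\qf{M}{x}$ over $\cube$.
  \item[(ii)] \emph{Local optimality.} At a $\zube$-maximizer $v$, every $\pm1$ completion of the zero coordinates is again a maximizer. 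This gives $v_i(Mv)_i\ge\sum_{j\colon v_j=0}\abs{M_{ij}}$ (\cref{lem:maximizer}). Averaging over $V$ and using $\Pr(V_j=0)\ge -R_{jj}=Q_{jj}$ yields $\abs{M}q\le q$ for $q_i=Q_{ii}$.
  \item[(iii)] \emph{Switching.} Perron--Frobenius converts $\abs{M}q\le q$ into a switching with $M=-\abs{M}$ (\cref{lem:switching}). Each irreducible block then has a simple Perron root, hence exactly one eigenvalue $-1$, which gives $n_k=d_k+1$.
  \item[(iv)] \emph{Rigidity of a block.} Rerun (i)--(ii) with $Q=M+uu^\T$, where $u$ is the Perron vector. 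This gives $\abs{M}\bm{1}=\bm{1}$ and a single, uniformly distributed zero coordinate of $V$. Computing $\E\norm{MV}^2$ in two ways gives $\tr(M^2)=n/(n-1)$. Cauchy--Schwarz against $\sum_{i\ne j}\abs{M_{ij}}=n$ then forces $\abs{M_{ij}}=1/(n-1)$. Finally, $V\perp\bm{1}$ with $n-1$ entries $\pm1$ forces $n$ to be odd.
\end{enumerate}
None of these ingredients appears in your proposal, and the heuristics you list do not substitute for them.
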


The rest of the paper is organized as follows. In \cref{sec:proof}, we prove \cref{thm:main} using Ball and Prodromou's combinatorial version of Vaaler's cube slicing theorem \cite{BP09}. In \cref{sec:equalities}, we verify \cref{prop:equalities} and prove the characterization in \cref{thm:equalities}. We conclude with further remarks in \cref{sec:discussion}.

\section{The lower bound} \label{sec:proof}

To prove \cref{thm:main}, let $G$ be the Gram matrix of the unit vectors, let $I_n$ be the identity matrix of order $n$, and set $M=G-I_n$. The diagonal of $M$ is zero, that is $M$ is \emph{hollow}, and the multiplicity of $-1$ as an eigenvalue of $M$ is at least $n-d$. The strategy, roughly speaking, is to find a point $v \in \cube$ with a large value of the quadratic form $\qf{M}{v}$, and round it to the vertices in $\qube$ while maintaining the large value.

The first ingredient is a beautiful result of Ball and Prodromou \cite{BP09}: every cube section supports a random point whose second moment is at least the identity on its underlying subspace. This is a combinatorial analogue of Vaaler's cube slicing theorem \cite{V79}, which states that every $k$-dimensional central section of $[-1,1]^n$ has $k$-dimensional volume at least $2^k$. We write $\proj_K$ for the orthogonal projection onto a subspace $K$.

\begin{theorem}[Ball and Prodromou {\cite[Theorem 1]{BP09}}] \label{thm:bp1}
  For every subspace $K$ of $\R^n$, there exists a random vector $V$ supported on $\csec$ such that $\E[VV^\T] \succeq \proj_K$. \qed
\end{theorem}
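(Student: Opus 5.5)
The statement is an existence claim about a probability measure, so I will first turn it into a deterministic inequality by convex duality, and then prove that inequality by a rounding walk on the polytope $\csec$.

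\textbf{Duality step.} Let $k=\dim K$ and identify symmetric operators on $K$ with symmetric $k\times k$ matrices. The set $\mathcal C=\dset{\E[VV^\T]}{V \text{ supported on } \csec}$ is the convex hull of $\dset{vv^\T}{v\in\csec}$. Since $\csec$ is compact, $\mathcal C$ is convex and compact.

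Suppose $\mathcal C$ is disjoint from the closed convex cone $\proj_K+\dset{X}{X\succeq 0 \text{ on } K}$. Separating these two sets gives a symmetric $A$ on $K$ with $\ip{A,X}<\ip{A,\proj_K+Y}$ for all $X\in\mathcal C$ and $Y\succeq 0$. Letting $Y$ vary forces $A\succeq 0$, and then
\[
  \max_{v\in\csec}\qf{A}{v}<\tr A.
\]
So it suffices to prove the following. For every positive semidefinite $A$ supported on $K$ there is $v\in\csec$ with $\qf{A}{v}\ge \tr A$.

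By homogeneity, and because both sides are continuous in $A$, we may assume $A$ is positive definite on $K$.

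\textbf{Constructive step.} I will build $v$ by a walk that freezes coordinates. Start with $v_0=0$ and $K_0=K$. At stage $t$, let $K_t=K\cap\dset{x}{x_j=0 \text{ for all frozen } j}$, and choose a direction $u\in K_t$. I would first try the top eigenvector of the compression $A_t$ of $A$ to $K_t$, with its sign chosen so that $\ip{Av_t,u}\ge 0$. Move $v$ along $u$ until some new coordinate hits $\pm1$, then freeze that coordinate. The line $v_t+\R u$ meets the cube in a segment, since the coordinates of $u$ in the free positions are not all zero unless $u=0$, so this step is well defined. Each stage lowers $\dim K_t$ by at most one, so the walk stops after at most $k$ stages, ending with $K_t=\sset{0}$.

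The sign choice makes the cross term nonnegative. If we move a distance $s$, then
\[
  \qf{A}{(v_t+su)}\ge \qf{A}{v_t}+s^2\lambda_{\max}(A_t).
\]
The quantity I want to track is the potential
\[
  \Phi_t=\qf{A}{v_t}+\tr A_t,
\]
which starts at $\tr A$ and ends at $\qf{A}{v}$. The proof therefore reduces to showing $\Phi_{t+1}\ge\Phi_t$. That is, the trace lost when $K_t$ shrinks to $K_{t+1}$ must be at most the gain $s^2\lambda_{\max}(A_t)$.

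\textbf{Main obstacle.} The monotonicity $\Phi_{t+1}\ge\Phi_t$ is the step I expect to be hard. Passing to a codimension-one subspace costs at most $\lambda_{\max}(A_t)$ in trace, by interlacing. So it would suffice to guarantee a step length $s\ge 1$, and that is not automatic.

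If the deterministic choice fails, I would replace it with a random choice. Take $u$ to be a Gaussian vector in $K_t$ with covariance proportional to $\proj_{K_t}$, and run the walk as a martingale, in the style of a Brownian walk frozen at the faces of the cube. Then by It\^o's formula,
\[
  \E\qf{A}{v_\infty}=\E\int \tr(A\proj_{K_t})\,dt.
\]
In the continuous version, the potential to control becomes $\E\int\tr(A_t)\,dt$. The remaining task is then to lower-bound the freezing times by a comparison argument: a free coordinate $j$ of $v_t$ has variance rate $\norm{\proj_{K_t}e_j}^2\le 1$, so it takes expected time at least of order its distance to $\pm1$ before it freezes.
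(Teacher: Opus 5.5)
The paper does not prove this statement. It imports it from \cite{BP09} as a black box, so your proposal has to stand on its own, and as written it has a genuine gap at its core.

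Your duality step is sound. Compactness of the convex hull of $\sset{vv^\T : v\in\csec}$ and strict separation from $\proj_K$ plus the positive semidefinite cone on $K$ correctly reduce the theorem to this inequality: every positive semidefinite $A$ supported on $K$ admits $v\in\csec$ with $\qf{A}{v}\ge\tr A$. This is exactly the form used in \cref{lem:main}. But that inequality is the entire content of the Ball--Prodromou theorem, and it is not proved.

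\textbf{The deterministic walk.} It reduces the inequality to $\Phi_{t+1}\ge\Phi_t$, which is false. Take $n=2$, $K=\R^2$, $A=I_2$ (perturb slightly if you want a unique top eigenvector), and first direction $u\propto(1,1-\delta)$.
\begin{itemize}
\item The first stage gives $v_1=(1,1-\delta)$, with $\Phi_0=2$ and $\Phi_1=3-2\delta+\delta^2$.
\item The second stage has step length $s=\delta$ while the trace drops by $1$, ending at $\Phi_2=2<\Phi_1$.
\item Even summed, the bounds you retain give $\sum_t s_t^2\lambda_{\max}(A_t)=2-2\delta+2\delta^2<\tr A$. The true final value $2$ is reached only through the cross term $2s\ip{Av_1,e_2}$ that you discarded.
\end{itemize}
So neither a stagewise nor a summed accounting of this type can work; one needs a global argument, which you do not supply. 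There is also a smaller slip. Termination needs each stage to lower $\dim K_t$ by at least one. The interlacing bound needs at most one, and that fails when several coordinates reach $\pm1$ at once.

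\textbf{The Brownian version.} It does not close the gap. With $dv_t=\proj_{K_t}\,dB_t$, It\^o's formula gives $\E[v_\infty v_\infty^\T]=\E\int_0^\infty\proj_{K_t}\,dt$, and the law of $(K_t)$ does not depend on $A$. Hence requiring $\E\int_0^\infty\tr(A\proj_{K_t})\,dt\ge\tr A$ for all positive semidefinite $A$ on $K$ is literally the statement $\E[v_\infty v_\infty^\T]\succeq\proj_K$. That is the theorem itself for the particular candidate $V=v_\infty$, and you have not shown this candidate works.

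The comparison you sketch cannot supply this, for three reasons.
\begin{itemize}
\item The bound $\norm{\proj_{K_t}e_j}^2\le1$ only says something about when individual coordinates reach $\pm1$. Some never do, because the walk stops as soon as $K_t=\sset{0}$.
\item What must be controlled is the trace lost when coordinate $j$ freezes. Since $\proj_{K_{t+1}}=\proj_{K_t}-\hat p\hat p^\T$ with $\hat p=\proj_{K_t}e_j/\norm{\proj_{K_t}e_j}$, this loss is exactly $\hat p^\T A\hat p$. It depends on how the facet normals of $\csec$ align with the eigenvectors of $A$, not on freezing times. Only for $A=\proj_K$ does counting frozen coordinates suffice, because a terminal point has at least $\dim K$ coordinates equal to $\pm1$.
\item The inequality is sharp. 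For $K=\R^n$ and $A=I_n$ each coordinate is an independent Brownian motion stopped at $\pm1$, with expected exit time exactly $1$, so estimates that hold only up to constants cannot yield $\tr A$.
\end{itemize}

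As it stands, the proposal is a correct dual reformulation plus two unfinished strategies. The essential inequality $\max_{v\in\csec}\qf{A}{v}\ge\tr A$ remains unproved.
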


The second ingredient rounds a point of $\cube$ to a vertex of $\qube$ without decreasing the value of the quadratic form.

\begin{lemma} \label{lem:rounding}
  For every hollow matrix $M$ of order $n$, and every vector $v \in \cube$, there exists a vector $\eps \in \qube$ such that $\qf{M}{\eps} \ge \qf{M}{v}$.
\end{lemma}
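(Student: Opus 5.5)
Since $M$ is hollow, the quadratic form $f(x) = \qf{M}{x} = \sum_{i \ne j} M_{ij} x_i x_j$ is affine in each coordinate separately. Fixing all coordinates except $x_i$, it has the form $f(x) = a_i x_i + b_i$, where $a_i = 2\sum_{j \ne i} M_{ij} x_j$ and $b_i$ do not depend on $x_i$. We may assume $M$ is symmetric, replacing it by $(M+M^\T)/2$ if necessary. This leaves the quadratic form unchanged and keeps the matrix hollow. The plan is to round the coordinates of $v$ to $\pm 1$ one at a time, never decreasing $f$.

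Start with $x^{(0)} = v \in \cube$. At step $i$, for $i = 1, \dots, n$, look at the function $t \mapsto f(x^{(i-1)}_1, \dots, x^{(i-1)}_{i-1}, t, x^{(i-1)}_{i+1}, \dots, x^{(i-1)}_n)$ on $[-1,1]$. It is affine in $t$, because the coefficient of $t^2$ is $M_{ii} = 0$. An affine function on an interval attains its maximum at an endpoint. So there is a choice $\eps_i \in \sset{\pm1}$ whose value is at least the value at the current coordinate $t = x^{(i-1)}_i \in [-1,1]$. Let $x^{(i)}$ be $x^{(i-1)}$ with its $i$-th coordinate replaced by $\eps_i$. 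Then $f(x^{(i)}) \ge f(x^{(i-1)})$, and $x^{(i)} \in \cube$ still holds.

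After $n$ steps every coordinate lies in $\sset{\pm1}$, so $\eps = x^{(n)} \in \qube$ and $\qf{M}{\eps} \ge \qf{M}{v}$. There is no real obstacle. The only point to check is that later steps do not disturb coordinates already fixed; they don't, since each step changes only one coordinate. Equivalently, one can argue by compactness: $f$ is multilinear, so its maximum over the box $\cube$ is attained at a vertex. The sequential version is preferable because it produces a vertex dominating the specific point $v$.
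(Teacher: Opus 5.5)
Your proof is correct, but it is deterministic where the paper's proof is probabilistic.

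The paper rounds all coordinates at once. It takes independent $\pm1$ random variables $V_i$ with $\E[V_i]=v_i$. Hollowness removes the diagonal terms, which are the only ones where $\E[V_iV_j]\ne v_iv_j$. Hence $\E[\qf{M}{V}]=\sum_{i\ne j}M_{ij}v_iv_j=\qf{M}{v}$, so some outcome does at least as well as $v$, and no symmetrization is needed.

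Your coordinate-by-coordinate rounding is the method of conditional expectations applied to that same random rounding. By multilinearity, the conditional expectation after fixing the first $i$ signs is $f(\eps_1,\dots,\eps_i,v_{i+1},\dots,v_n)$, and your choices keep it from decreasing.

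What each version buys:
\begin{itemize}
  \item Your version gives an explicit $n$-step procedure.
  \item The averaging form says something about every outcome at once: if $v$ already maximizes $f$ over $\cube$, then every point in the support of the rounding is also a maximizer. The paper reuses exactly this in \cref{lem:maximizer}, where it randomly completes the zero coordinates of a maximizer to extract constraints on the entries of $M$.
\end{itemize}
Your sequential version certifies only one good vertex. You could recover the stronger statement by observing that an affine function of $t$ maximized at an interior $t$ is constant, so both endpoints remain maximizers.

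One small correction to your closing remark: the compactness argument is not weaker for this lemma. A vertex attaining $\max_{\cube} f$ dominates every point of the cube, including $v$. Its only cost is that ``the maximum is attained at a vertex'' needs the same coordinatewise argument to justify it.
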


\begin{proof}
  Let $V$ be a random vector supported on $\qube$ whose coordinates $V_1, \dots, V_n$ are independent $\pm 1$ random variables such that $\E[V_i] = v_i$ for every $i$. Since $M$ is hollow, we have
  \[
    \E[\qf{M}{V}] = \qf{M}{v},
  \]
  which warrants a vector $\eps \in \qube$ that satisfies $\qf{M}{\eps} \ge \qf{M}{v}$.
\end{proof}

Together, these two results yield the following matrix lemma, which immediately implies \cref{thm:main}.

\begin{lemma} \label{lem:main}
  For every symmetric hollow matrix $M$, there exists a vector $\eps \in \qube$ such that $\qf{M}{\eps}$ is at least the sum of positive eigenvalues of $M$, which, in particular, is at least the multiplicity of $-1$ as an eigenvalue of $M$.
\end{lemma}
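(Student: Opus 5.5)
Let $K$ be the span of the eigenvectors of $M$ with positive eigenvalues. Then $M$ restricted to $K$ is positive definite, and we write $M = \sum_j \lambda_j u_j u_j^\T$ with orthonormal eigenvectors $u_j$. By \cref{thm:bp1} there is a random vector $V$ supported on $\csec$ with $\E[VV^\T] \succeq \proj_K$. Every realization of $V$ lies in $K$, so $\qf{M}{V} = \qf{M_+}{V}$, where $M_+ = \sum_{\lambda_j>0} \lambda_j u_j u_j^\T$ is positive semidefinite with range $K$. Therefore
\[
  \E[\qf{M}{V}] = \tr\mleft(M_+\E[VV^\T]\mright) \ge \tr(M_+\proj_K) = \tr(M_+),
\]
and $\tr(M_+)$ is the sum of the positive eigenvalues of $M$. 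The inequality uses that the trace of a product of two positive semidefinite matrices is nonnegative: $\E[VV^\T]-\proj_K \succeq 0$ and $M_+\succeq 0$. Hence some point $v$ in the support, so $v \in \cube$, satisfies $\qf{M}{v} \ge \tr(M_+)$. \Cref{lem:rounding} applies because $M$ is hollow, and it gives $\eps \in \qube$ with $\qf{M}{\eps} \ge \qf{M}{v} \ge \tr(M_+)$.

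For the second assertion, hollowness gives $\tr M = 0$. So the sum of the positive eigenvalues equals the sum of the absolute values of the negative eigenvalues. The eigenvalue $-1$ appears with its multiplicity $m$ among the negative ones, and the remaining negative eigenvalues contribute nonnegatively to that sum of absolute values. Hence $\tr(M_+) \ge m$.

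The one step that needs care is the trace inequality, and specifically checking that $\qf{M}{V}=\qf{M_+}{V}$. This holds because $V \in K$ almost surely, and $K$ is orthogonal to the nonpositive eigenspaces, so the negative part of $M$ contributes nothing. If $K=\{0\}$, the bound is $0$, and it follows from \cref{lem:rounding} applied to $v=0$. Everything else is direct from \cref{thm:bp1} and \cref{lem:rounding}.
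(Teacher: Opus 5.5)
Your proof is correct and follows the paper's argument essentially step for step. You split $M$ into its positive part, apply \cref{thm:bp1} on the range $K$ of that part, use $\qf{M}{v}=\qf{M_+}{v}$ on $K$ together with the trace inequality, round via \cref{lem:rounding}, and use $\tr M=0$ for the multiplicity bound. Your separate treatment of $K=\{0\}$ is a harmless extra that the paper does not need.
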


\begin{proof}
  Let $M = Q + R$ be the decomposition of $M$ into its positive and negative parts, that is,
  \[
    Q = \sum_{i \colon \lambda_i > 0}\lambda_i u_i u_i^\T\quad \text{and} \quad R = \sum_{i \colon \lambda_i < 0}\lambda_i u_i u_i^\T,
  \]
  where $u_1, \dots, u_n$ form an eigenbasis of $M$, and $\lambda_1, \dots, \lambda_n$ are the corresponding eigenvalues. In particular, $Q$ is positive semidefinite, $\range Q \subseteq \ker R$, and $\tr Q$ is the sum of the positive eigenvalues of $M$.

  Set $K = \range Q$, and let $V$ be the random vector supported on $\csec$ given by \cref{thm:bp1} for $K$. Since $Q$ is positive semidefinite and $\E[VV^\T]\succeq\proj_K$, taking the trace against $Q$ gives\footnote{For positive semidefinite matrices $A$ and $B$, $\tr(AB)=\tr\mleft(A^{1/2}BA^{1/2}\mright)\ge0$.}
  \[
    \E[\qf{Q}{V}] = \tr(Q\E[VV^\T]) \ge \tr(Q\proj_K) = \tr Q.
  \]
  Thus some $v \in \csec$ satisfies $v^\T Qv\ge\tr Q$. Since $v \in K \subseteq \ker R$, we have
  \[
    \qf{M}{v} = \qf{Q}{v} \ge \tr Q.
  \]
  According to \cref{lem:rounding}, there exists $\eps \in \qube$ such that $\qf{M}{\eps} \ge \tr Q$. Finally, if $m$ is the multiplicity of $-1$ as an eigenvalue of $M$, then $\tr M=0$ because $M$ is hollow, and hence $\tr Q=-\tr R\ge m$.
\end{proof}

\begin{proof}[Proof of \cref{thm:main}]
  Let $G$ be the Gram matrix of $v_1,\dots,v_n$. The matrix $M=G-I_n$ is hollow, and the multiplicity of $-1$ as an eigenvalue of $M$ is $n-\rank G$. By \cref{lem:main}, there exists $\eps \in \qube$ such that $\qf{M}{\eps}$ is at least the sum of the positive eigenvalues of $M$, hence at least $n-\rank G\ge n-d$. Therefore,
  \[
    \norm*{\eps_1 v_1 + \dots + \eps_n v_n}^2 = \qf{G}{\eps} = n + \qf{M}{\eps} \ge 2n-d. \qedhere
  \]
\end{proof}

\section{Equality cases} \label{sec:equalities}

We start with the straightforward verification of the equality cases in \cref{prop:equalities}.

\begin{proof}[Proof of \cref{prop:equalities}]
  Every signed sum of an orthonormal basis of $V_0$ has squared norm $\dim V_0$. By orthogonality of the subspaces, it suffices to show that every signed sum of the vertices $v_0,\dots,v_k$ of a centered regular simplex on the unit sphere of an even-dimensional subspace $V_i$ has squared norm at most $k+2$, where $k$ is the dimension of $V_i$. Indeed, the squared norm of any signed sum in \cref{prop:equalities} would therefore be at most
  \[
    \dim V_0+\sum_{i=1}^{n-d}(\dim V_i+2)=d+2(n-d)=2n-d.
  \]

  Note that distinct vertices satisfy $\ip{v_i,v_j}=-1/k$. Since $k+1$ is odd, any signs $\eps_0,\dots,\eps_k \in \sset{\pm 1}$ satisfy $\abs*{\eps_0 + \dots + \eps_k}\ge1$. Hence
  \[
    \norm*{\eps_0v_0 + \dots + \eps_kv_k}^2 = k+1-\frac{1}{k}\sum_{i \ne j}\eps_i\eps_j = \frac{1}{k}\left((k+1)^2-\left(\sum_i \eps_i\right)^2\right) \le k+2. \qedhere
  \]
\end{proof}

We now show that these are the only equality cases up to switching. In the proof of \cref{thm:main}, we decompose $M=G-I_n$ into positive and negative parts $Q$ and $R$, and find $x\in\cube\cap K$, where $K=\range Q$, such that $\qf{Q}{x}\ge\tr Q$. The next lemma describes what happens when the reverse inequality holds throughout $\cube\cap K$.

\begin{lemma} \label{lem:extremal}
  Let $Q$ be a positive semidefinite matrix of order $n$, and let $K=\range Q$. If
  \[
    \qf{Q}{x} \le \tr Q, \text{ for every } x \in \csec,
  \]
  then there exists a random vector $V$ supported on $\qsec$ such that
  \[
    \E\left[VV^\T\right] = \proj_K \quadand
    \qf{Q}{V} = \tr Q \quadas.
  \]
\end{lemma}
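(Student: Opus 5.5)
The plan is to start from \cref{thm:bp1} applied to $K=\range Q$, upgrade its inequality to an equality using the hypothesis, and then push the support of the random vector onto $\qsec$.

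\emph{Step 1 (equalities).} Let $V$ be the random vector on $\csec$ from \cref{thm:bp1}, so $\E[VV^\T]\succeq\proj_K$. As in the proof of \cref{lem:main},
\[
  \E[\qf{Q}{V}]=\tr(Q\E[VV^\T])\ge\tr(Q\proj_K)=\tr Q .
\]
The hypothesis gives $\qf{Q}{V}\le\tr Q$ pointwise, so $\qf{Q}{V}=\tr Q$ almost surely. Moreover $\tr\bigl(Q(\E[VV^\T]-\proj_K)\bigr)=0$.

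Since $V\in K$, the matrix $D=\E[VV^\T]-\proj_K$ is positive semidefinite with range inside $K$. Because $Q$ is positive definite on $K=\range Q$, the trace condition forces $D=0$. Hence $\E[VV^\T]=\proj_K$.

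\emph{Step 2 (moving to vertices of $P=\csec$).} Every $x$ in the support of $V$ maximizes the convex function $\qf{Q}{x}$ over the polytope $P$. Let $F$ be the face of $P$ containing $x$ in its relative interior. A convex function attaining its maximum over $F$ at a relative interior point is constant on $F$. However, $\qf{Q}{\cdot}$ is strictly convex on $K\supseteq F$, so $F=\{x\}$ and $x$ is a vertex of $P$.

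If one prefers not to argue pointwise, a fallback works too. Replace $V$ by a random vertex $W$ of $P$ with $\E[W\mid V]=V$. Then $\E[WW^\T]\succeq\E[VV^\T]$, and rerunning Step 1 for $W$ gives the same two equalities.

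So we may assume $V$ is supported on vertices $x$ of $P$. Such an $x$ is determined by its tight set $J=\{j:\abs{x_j}=1\}$, in the sense that $K\cap\{y: y_j=x_j,\ j\in J\}=\{x\}$.

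\emph{Step 3 (free coordinates vanish).} This is where I expect the main obstacle: showing that the non-tight coordinates of each such vertex are $0$. Vertices of $\csec$ generally have fractional coordinates, so this must come from the extremality, not from polytope geometry. I will use the KKT conditions at each maximizer. These give
\[
  QV=\proj_K\Bigl(\sum_{j\in J(V)}\lambda_j(V)\,\sgn(V_j)\,e_j\Bigr)
\]
with $\lambda_j\ge0$. Combining this with $\E[VV^\T]=\proj_K$ yields
\[
  Q=\proj_K Q=\E\bigl[V\,(\Lambda(V)\sgn V)^\T\bigr]\proj_K ,
\]
where $\Lambda(V)$ is the diagonal matrix of the multipliers.

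I would then compare diagonal entries and traces against the identities $\E[V_i^2]=(\proj_K)_{ii}$ and $\E[\norm{V}^2]=\dim K$. The aim is to show that any vertex with some $0<\abs{V_i}<1$ admits a perturbation of the measure, redistributing mass along the coordinate directions of the free variables, that strictly increases $\E[VV^\T]$ in a direction where $Q$ is positive. That would contradict the pointwise bound $\qf{Q}{x}\le\tr Q$ through the computation of Step 1.

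If this direct argument stalls, my alternative is to revisit the Ball--Prodromou construction itself. I would run their proof of \cref{thm:bp1} while tracking where equality can hold, in the hope that their random vector is already built from $\{0,\pm1\}$-valued points when all of their inequalities are tight.
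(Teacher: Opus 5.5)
Your Steps 1 and 2 are correct and match the paper's argument. The sandwich $\tr Q\le\E[\qf{Q}{V}]\le\tr Q$ upgrades the Ball--Prodromou inequality to $\E[VV^\T]=\proj_K$ and gives $\qf{Q}{V}=\tr Q$ almost surely. Strict convexity of the quadratic form on $K$ then places every support point at a vertex of $\csec$.

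The gap is Step 3. It is a plan rather than an argument: KKT multipliers, an unspecified ``perturbation of the measure'', or re-running the Ball--Prodromou construction. None of these is carried to a conclusion, so as written you have not shown that $V$ is supported on $\zube$.

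The missing idea is a short counting argument that uses two facts you already wrote down.
\begin{itemize}
\item Let $x$ be a vertex with tight set $J=\dset{j}{\abs{x_j}=1}$. The set $K\cap\dset{y}{y_j=x_j,\ j\in J}$ is a nonempty affine subspace of dimension at least $\dim K-\abs{J}$. Your observation that it equals $\sset{x}$ therefore forces $\abs{J}\ge\dim K$. Equivalently, if $\abs{J}<\dim K$, a nonzero $y\in K$ vanishing on $J$ gives $x\pm ty\in\csec$ for small $t>0$.
\item Hence every support point satisfies $\norm{x}^2\ge\abs{J}\ge\dim K$.
\item Step 1 already gives $\E\norm{V}^2=\tr\E[VV^\T]=\tr\proj_K=\dim K$.
\end{itemize}
So $\norm{V}^2=\dim K$ almost surely. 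This forces $\abs{J}=\dim K$ and every non-tight coordinate to be $0$, that is, $V\in\qsec$ almost surely.

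Your remark that this ``must come from the extremality, not from polytope geometry'' is only half right. Extremality enters only through the trace identity $\E\norm{V}^2=\dim K$. The rest is the elementary polytope fact that vertices of $\csec$ have at least $\dim K$ tight coordinates. No KKT analysis or revisiting of Ball--Prodromou is needed.
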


\begin{proof}
  According to \cref{thm:bp1}, let $V$ be a random vector on $\csec$ such that $\E[VV^\T] \succeq \proj_K$. Since $V^\T QV\le\tr Q$ almost surely, we have
  \[
    \tr Q
    \le \tr(Q\E[VV^\T])
    = \E[V^\T QV]
    \le \tr Q.
  \]
  Since $V$ is supported on $K$, the operator $\E[VV^\T] - \proj_K$ acts on $K$. It is positive semidefinite, and $Q$ is positive definite on $K$, so equality forces $\E[VV^\T] = \proj_K$.\footnote{If $A$ is positive definite and $B$ is positive semidefinite, then $\tr(AB)=0$ implies that $B$ is zero: the positive semidefinite matrix $A^{1/2}BA^{1/2}$ has trace zero, hence vanishes, and $A^{1/2}$ is invertible.} Furthermore, $\qf{Q}{V} = \tr Q$ almost surely.

  It remains to show that $V$ is supported on $\zube$. Note that every support point of $V$ achieves the maximum $\tr Q$ of $x\mapsto\qf{Q}{x}$ over $\csec$. Since this quadratic form is strictly convex on $K$, each such maximizer is a vertex of $\csec$. Write $d=\dim K$. A vertex must have at least $d$ coordinates equal to $\pm1$: otherwise, a nonzero direction in $K$ vanishing on those coordinates would allow a perturbation in both directions within $\csec$. However,
  \[
    \E\norm{V}^2  = \E[\tr\mleft(VV^\T\mright)] = \tr \E[VV^\T] = \tr \proj_K = d.
  \]
  Hence $V$ has exactly $d$ nonzero coordinates, and they are $\pm1$, almost surely.
\end{proof}

In the equality case, \cref{lem:extremal} will provide maximizers of $x\mapsto\qf{M}{x}$ over $\cube$ that lie in $\zube$. The next lemma derives constraints on the entries of $M$ from any such maximizer.

\begin{lemma} \label{lem:maximizer}
  For every symmetric hollow matrix $M$ of order $n$, and every maximizer $v$ of $x \mapsto \qf{M}{x}$ over $\cube$, if $v \in \zube$, then
  \[
    v_i(Mv)_i \ge \sum_{j \colon v_j = 0}\abs{M_{ij}}, \text{ for every }i,
  \]
  and moreover $(Mv)_j = 0$ for every $j$ with $v_j = 0$.
\end{lemma}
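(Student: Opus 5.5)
We will prove both conclusions of \cref{lem:maximizer} using only first-order optimality of $v$, applied to perturbations that change a handful of coordinates. Since $M$ is hollow, $f(x)=\qf{M}{x}$ is affine in each coordinate separately. Concretely, changing $x_i$ by $t$ changes $f$ by $2t(Mx)_i$, and this formula has no quadratic term in $t$ because $M_{ii}=0$. Moreover, $(Mx)_i$ does not depend on $x_i$.

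\textbf{Step 1: coordinates with $v_j=0$.} Fix $j$ with $v_j=0$. Moving $x_j$ to $t\in[-1,1]$ keeps us in $\cube$ and changes $f$ by $2t(Mv)_j$. If $(Mv)_j\neq0$, we choose $t=\sgn((Mv)_j)$ and strictly increase $f$, which contradicts maximality. Hence $(Mv)_j=0$, giving the second claim.

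\textbf{Step 2: the inequality for $i$ with $v_i=\pm1$.} Fix such an $i$ and let $Z=\dset{j}{v_j=0}$. We perturb $x_i$ from $v_i$ to $v_i-2sv_i$ for a small $s\ge0$, which stays feasible. At the same time we set $x_j=-\delta_j s$ for $j\in Z$, where $\delta_j=\sgn(M_{ij}v_i)$ is chosen so that each new zero-coordinate helps the loss at $i$. Write $w$ for the direction $(-2v_i e_i-\sum_{j\in Z}\delta_j e_j)$, so that $x=v+sw$. Then
\[
  f(v+sw)=f(v)+2s\,\ip{Mv,w}+s^2\qf{M}{w}.
\]
By Step 1, $(Mv)_j=0$ for $j\in Z$, so $\ip{Mv,w}=-2v_i(Mv)_i$. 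The quadratic term is $\qf{M}{w}=4v_i\sum_{j\in Z}\delta_jM_{ij}+\sum_{j,k\in Z}\delta_j\delta_kM_{jk}$, using $M_{ii}=0$. Its first part equals $4\sum_{j\in Z}\abs{M_{ij}}$.

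\textbf{The main obstacle} is that this is second order in $s$, while the linear term $-4sv_i(Mv)_i$ is first order, so a small-$s$ argument only yields $v_i(Mv)_i\ge0$. The fix is to take $s$ large, namely $s=1$. Then $x_i=-v_i$ and $x_j=-\delta_j$, which is still in $\cube$. The problem is the unwanted block term $\sum_{j,k\in Z}\delta_j\delta_kM_{jk}$. To kill it, we randomize instead: set $x_j=\eta_j\delta_j$ with independent signs and mean zero. For $j\ne k$ the cross terms then vanish in expectation, and the diagonal terms vanish by hollowness.

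Concretely, let $x_i=-v_i$, and for $j\in Z$ let $x_j=-\delta_j\xi_j$ with $\xi_j$ independent and uniform on $\sset{\pm1}$. Take instead the simpler choice $x_j=-\delta_j\xi_j$ where $\xi_j\in\sset{0,1}$ is nonrandom equal to $1$... and the block cross terms remain. So we settle on a variant: set $x_j=\delta_j\sigma_j$ with $\sigma_j$ i.i.d.\ random signs with mean $\tfrac12$ sign-biased? The cleanest choice is $x_j=-\delta_j\cdot\tfrac12(1+\sigma_j)$? This also leaves cross terms. Instead, use $\E$ over independent $\sigma_j$ uniform on $\sset{\pm1}$ and $x_j=\sigma_j$ while flipping $x_i$ to $-v_i$ only.

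Compute $\E f(x)-f(v)$. The terms pairing $i$ with $Z$ have zero mean, and that seems to lose the $\abs{M_{ij}}$ gain. So instead we compare two options and sum them, a two-point averaging: take $x^{\pm}$ with $x_i=-v_i$ and $x_j=\mp\delta_j\sigma_j$. Then the cross terms between $i$ and $Z$ contribute $\pm$, and the aim is to average with the unflipped configuration, where the $i$–$Z$ terms change sign.

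Precisely, consider $y=(v$ with $x_j=\delta_j\sigma_j v_i$-aligned$)$ and $x=(y$ with $i$ flipped$)$. Then
\[
  f(y)-f(x)=4v_i\Big((Mv)_i+\sum_{j\in Z}M_{ij}y_j\Big).
\]
Maximality gives $f(y)\le f(v)$. The remaining step, which we expect to close the argument, is to choose $y_j=\sgn(M_{ij})v_i$ with a random scaling $\sigma_j\in\sset{0,1}$. We then use the expected bound $\E f(y)=f(v)+\sum_{j}\,2(Mv)_jy_j+\E\sum_{j\ne k\in Z}M_{jk}y_jy_k$, where the middle sum vanishes by Step 1. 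With independent mean-zero signs on $Z$ the last term also vanishes. We then apply maximality to $x$, exploiting the sign correlation between $y_j$ and $M_{ij}$ to obtain $f(x)-f(v)\ge 4\big(-v_i(Mv)_i+\sum_{j\in Z}\abs{M_{ij}}\big)\cdot\tfrac12$ up to a constant. Tuning these constants is the crux; the target is $v_i(Mv)_i\ge\sum_{j\in Z}\abs{M_{ij}}$.
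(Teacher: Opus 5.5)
Your Step 1 is correct and matches the paper's first step, and your $s=1$ test point $x=v-2v_ie_i-\sum_{j\in Z}\delta_je_j$ is the right one. However, the proposal never closes. Write $M_{ZZ}$ for the principal submatrix of $M$ on $Z$ and $\delta=(\delta_j)_{j\in Z}$. Maximality at $v$ gives only
\[
  0\ge \qf{M}{x}-\qf{M}{v}=-4v_i(Mv)_i+4\sum_{j\in Z}\abs{M_{ij}}+\delta^\T M_{ZZ}\delta .
\]
This yields the claim only if $\delta^\T M_{ZZ}\delta\ge0$. But maximality applied to $v-\sum_{j\in Z}\delta_je_j$ (using Step 1) only tells you $\delta^\T M_{ZZ}\delta\le0$. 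Your later randomizations cannot repair this, for the reason you noticed yourself: independent mean-zero signs on $Z$ kill the block term, but they also kill the $i$--$Z$ cross terms. So no tuning of constants recovers $\sum_{j\in Z}\abs{M_{ij}}$. Separately, you never treat indices $i$ with $v_i=0$. For those the inequality reads $0\ge\sum_{j\in Z}\abs{M_{ij}}$, i.e.\ $M_{ij}=0$ for all $i,j\in Z$, and this is part of the statement.

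The missing idea is that every completion of the zero coordinates of $v$ is again a maximizer. You had the ingredient in hand. If $V$ agrees with $v$ off $Z$ and has independent uniform $\pm1$ entries on $Z$, hollowness gives $\E[\qf{M}{V}]=\qf{M}{v}$. Each outcome is at most the maximum, so every outcome attains it; this is how the paper argues.

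\emph{Case $v_i=0$.} For $j\in Z\setminus\{i\}$, the four completions obtained by varying $u_i,u_j\in\sset{\pm1}$ have second difference $8M_{ij}$, which must therefore vanish.

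\emph{Case $v_i=\pm1$.} Take the completion $w$ with $w_j=-\delta_j$ whenever $M_{ij}\ne0$, and compare it with $w-2v_ie_i$:
\[
  0\le\qf{M}{w}-\qf{M}{(w-2v_ie_i)}=4v_i(Mw)_i=4\bigl(v_i(Mv)_i-\sum_{j\in Z}\abs{M_{ij}}\bigr).
\]

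Alternatively, you can finish your own computation directly. By Step 1, $\qf{M}{(v+\sigma)}=\qf{M}{v}+\sigma^\T M\sigma$ for every $\sigma\in\cube$ supported on $Z$. Maximality then makes $M_{ZZ}$ negative semidefinite; being hollow, it has trace zero, so $M_{ZZ}=0$. This settles the case $v_i=0$ and removes the block term from your $s=1$ inequality, which then gives exactly $v_i(Mv)_i\ge\sum_{j\in Z}\abs{M_{ij}}$.
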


\begin{proof}
  If $v_j=0$, then $v\pm e_j\in\cube$, so maximality and $M_{jj}=0$ give 
  \[
    0\ge\qf{M}{(v\pm e_j)}-\qf{M}{v}=\pm2(Mv)_j,
  \]
  which implies $(Mv)_j=0$.

  Let $V$ agree with $v$ at every nonzero coordinate of $v$, and choose its remaining coordinates independently and uniformly at random from $\sset{\pm1}$. Since $M$ is hollow, the expectation satisfies $\E[\qf{M}{V}]=\qf{M}{v}$. Thus every support point of $V$ achieves the maximum $\qf{M}{v}$ of $x\mapsto\qf{M}{x}$ over $\cube$. Fix an arbitrary $i \in \sset{1,\dots, n}$. We break the rest of the proof into two cases.

  \medskip\noindent\textbf{Case 1: $v_i=0$.} Fix $j$ with $v_j=0$. If $j=i$, then $M_{ij}=0$ because $M$ is hollow. Otherwise, choose a support point $u$ of $V$ with $u_i=u_j=-1$. The four vectors $u$, $u+2e_i$, $u+2e_j$, and $u+2e_i+2e_j$ are all support points and hence attain this maximum. Therefore,
  \[
    0=\qf{M}{(u+2e_i+2e_j)}-\qf{M}{(u+2e_i)}-\qf{M}{(u+2e_j)}+\qf{M}{u}=8M_{ij}.
  \]
  Thus $M_{ij}=0$ whenever $v_j=0$, and the desired inequality holds with equality.

  \medskip\noindent\textbf{Case 2: $v_i \ne 0$.} Choose a support point $w$ of $V$ so that $v_iM_{ij}w_j=-\abs{M_{ij}}$ whenever $v_j=0$. Since $w$ attains this maximum, $w_i=v_i$, and $M_{ii}=0$, we have
  \[
      0 \le \qf{M}{w}-\qf{M}{(w-2v_ie_i)}=4v_i(Mw)_i =4\left(v_i(Mv)_i-\sum_{j\colon v_j=0}\abs{M_{ij}}\right). \qedhere
  \]
\end{proof}

Here and throughout, $\abs{M}$ denotes the matrix obtained by taking the absolute value of each entry of $M$. We use the same notation for vectors.

In the proof of \cref{thm:equalities}, the preceding lemmas will help produce a nonnegative vector $q$ such that $\abs{M}q\le q$. This vector is formed from the diagonal entries of $Q$, the positive part of $M$, and is positive wherever $M$ has a nonzero row. The next lemma shows that these conditions allow us to switch $M$ to an entrywise nonpositive matrix, so that the Perron--Frobenius theorem applies.

\begin{lemma} \label{lem:switching}
  Let $M$ be a symmetric hollow matrix whose negative eigenvalues are all equal to $-1$. Suppose there exists a nonnegative vector $q$ such that $\abs{M}q\le q$ and $q_i>0$ whenever the $i$-th row of $M$ is nonzero. Then there exists a diagonal matrix $D$ with $\pm1$ diagonal entries such that $DMD=-\abs{M}$.
\end{lemma}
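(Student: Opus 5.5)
The plan is to reduce to the connected components of the support graph of $M$, and on each component to compare the quadratic form of $M$ with that of $\abs{M}$ at a $-1$-eigenvector. Let $S$ be the set of indices whose rows of $M$ are nonzero. Rows and columns outside $S$ vanish identically, so the diagonal entries of $D$ there can be arbitrary. Consider the graph on $S$ with an edge $ij$ whenever $M_{ij}\ne0$, and let $C_1,\dots,C_r$ be its connected components. After permuting indices, $M$ is block diagonal with blocks $M_{C_1},\dots,M_{C_r}$ and a zero block, so it suffices to find signs on each $C$ separately.

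Fix a component $C$. Since edges never leave $C$, we have $(\abs{M}q)_C=\abs{M_C}q_C\le q_C$. Here $q_C>0$ by hypothesis and $\abs{M_C}$ is irreducible. By the Collatz--Wielandt characterization of the Perron root, $\rho(\abs{M_C})\le1$; this also follows directly by testing the Perron vector against $q_C$.

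Next we show that $M_C$ actually has $-1$ as an eigenvalue. The block $M_C$ is hollow, so $\tr M_C=0$, and it is nonzero. Hence it has a negative eigenvalue, which is also an eigenvalue of $M$ and therefore equals $-1$. Take a unit eigenvector $x$ of $M_C$ for $-1$. Then
\[
  -1=\qf{M_C}{x}=\sum_{i,j\in C}M_{ij}x_ix_j\ge-\sum_{i,j\in C}\abs{M_{ij}}\abs{x_i}\abs{x_j}=-\qf{\abs{M_C}}{\abs{x}}\ge-\rho(\abs{M_C})\ge-1.
\]
So every inequality in this chain is an equality.

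The two equalities give two conclusions:
\begin{itemize}
  \item Equality in the Rayleigh quotient means $\abs{x}$ is an eigenvector of the symmetric irreducible nonnegative matrix $\abs{M_C}$ for its top eigenvalue $1$. By Perron--Frobenius, $\abs{x}>0$ entrywise on $C$.
  \item Equality in the termwise inequality gives $M_{ij}x_ix_j=-\abs{M_{ij}}\abs{x_i}\abs{x_j}$ for all $i,j\in C$.
\end{itemize}
Set $D_{ii}=\sgn x_i$ for $i\in C$, which is well defined because $x_i\ne0$. Dividing the second identity by $\abs{x_i}\abs{x_j}>0$ gives $D_{ii}M_{ij}D_{jj}=-\abs{M_{ij}}$ on $C$.

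Doing this on every component, and choosing $D_{ii}=1$ outside $S$, yields $DMD=-\abs{M}$. The cross-component and zero-row entries are zero on both sides.

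The main obstacle is ensuring that $\abs{x}$ has no zero coordinates. Without that, the sign of $x_i$ would not determine $D_{ii}$, and the entry identity could not be divided through. This is exactly why the argument works componentwise: irreducibility of $\abs{M_C}$ combined with Perron--Frobenius gives strict positivity. The positivity of $q$ on $S$ is what makes the bound $\rho\le1$ available on each block.
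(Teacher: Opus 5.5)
Your proof is correct and follows the paper's route: reduce to irreducible blocks, get $\rho(\abs{M_C})\le 1$ by testing against $q_C$, show that a $-1$-eigenvector $x$ has $\abs{x}$ a positive Perron vector with termwise equality in the triangle inequality, and take $D_{ii}=\sgn x_i$. The only difference is cosmetic: you certify that equality through the Rayleigh quotient of the symmetric matrix $\abs{M_C}$, whereas the paper pairs the vector inequality $\abs{M}\abs{x}\ge\abs{x}$ with the positive Perron vector $y$.
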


\begin{proof}
  Using the connected components of the support graph of $M$, we can partition $M$ into a block diagonal matrix, where each diagonal block is either a zero matrix or a nonzero irreducible matrix. By focusing on a diagonal block, we may assume that $M$ itself is nonzero and irreducible, and hence $q$ is positive. Let $y$ be a positive Perron vector of $\abs{M}$, so that $\abs{M}y=\rho(\abs{M})y$, where $\rho(\abs{M})$ is the spectral radius of $\abs{M}$. Note that
  \[
    \rho(\abs{M})q^\T y = q^\T \abs{M} y \le q^\T y.
  \]
  Since $q$ is positive, we have $q^\T y > 0$, which implies $\rho(\abs{M}) \le 1$.

  Since $M$ is nonzero and hollow, it has a negative eigenvalue, which must be $-1$. Let $x$ be a corresponding eigenvector. The inequality $\abs{M}\abs{x}\ge\abs{Mx}=\abs{x}$ and the positivity of $y$ give
  \[
    y^\T\abs{x}\le y^\T\abs{M}\abs{x}=\rho(\abs{M})y^\T\abs{x}\le y^\T\abs{x}.
  \]
  Since $y^\T\abs{x}>0$, we have $\rho(\abs{M})=1$. As $y$ is positive, equality also forces $\abs{M}\abs{x}=\abs{x}$. Irreducibility then gives that $\abs{x}$ is positive. Equality in the coordinatewise triangle inequality $\abs{Mx}\le\abs{M}\abs{x}$ implies that $\sgn(M_{ij}x_j)=-\sgn(x_i)$ whenever $M_{ij}\ne0$. Hence the diagonal matrix $D$ with entries $D_{ii}=\sgn(x_i)$ satisfies $DMD=-\abs{M}$.
\end{proof}

To finish the characterization, we apply \cref{lem:switching} and consider the nontrivial connected components of the support graph of $M$ separately. Each corresponding block is then irreducible, hollow, and entrywise nonpositive. The next lemma determines its form under the equality conditions.

\begin{lemma} \label{lem:rigidity}
  For every symmetric irreducible hollow matrix $M$ of order $n$ with nonpositive entries, if $-1$ is the unique negative eigenvalue of $M$, and $M$ satisfies
  \[
    \eps^\T M \eps \le 1, \text{ for every }\eps \in \sset{\pm 1}^n,
  \]
  then the order $n$ is odd, and
  \[
    M = \frac{I_n - J_n}{n-1}.
  \]
\end{lemma}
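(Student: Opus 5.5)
Set $A=-M$, which is entrywise nonnegative, symmetric, irreducible and hollow. The hypothesis that $-1$ is the unique negative eigenvalue of $M$ means that $1$ is the unique positive eigenvalue of $A$. By Perron--Frobenius it is simple, with a positive eigenvector $y$ normalized so that $\norm{y}=1$. Hence
\[
  A = yy^\T - P,
\]
where $P$ is positive semidefinite with $Py=0$. Taking traces and using hollowness gives $\tr P = 1$. The hypothesis becomes $\eps^\T A\eps \ge -1$ for every $\eps\in\qube$, that is,
\[
  (y^\T\eps)^2 \ge \eps^\T P\eps - 1 .
\]
Meanwhile $\sum_i \eps^\T P\eps$ averaged over uniform $\eps$ equals $\tr P = 1$. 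The plan is to play this off against a well-chosen $\eps$ that nearly balances $y$.

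\textbf{Step 1: the trace and average constraints.} Averaging $\eps^\T A\eps$ over uniform $\eps$ gives $0$, since $A$ is hollow. So the values $\eps^\T A \eps$ have mean $0$ and are bounded below by $-1$. I would compute explicitly
\[
  \eps^\T A\eps = (y^\T\eps)^2 - \eps^\T P\eps,
\]
and note that $\eps^\T P\eps \le \lambda_{\max}(P)\,n$. This is not immediately sharp. A better route is a direct combinatorial one: choose $\eps$ to make $y^\T\eps$ small. Order the coordinates and pick signs greedily so that $\abs{y^\T\eps}\le \max_i y_i$. Then $\eps^\T A\eps \ge -1$ yields $\eps^\T P\eps \le 1 + y_{\max}^2$.

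\textbf{Step 2: the main obstacle, forcing $A$ to have constant off-diagonal entries.} I would try an averaging argument over all $\eps$ with $\abs{y^\T \eps}$ minimal, or more cleanly use
\[
  \eps^\T A\eps = \sum_{i\ne j} A_{ij}\eps_i\eps_j \ge -1 .
\]
Average this over all $\eps$ with a prescribed sign pattern split $S\sqcup S^c$, i.e.\ $\eps=\mathbf 1_S-\mathbf 1_{S^c}$, to get the cut inequality
\[
  \sum_{i\ne j}A_{ij} - 4\,A(S,S^c) \ge -1 \quad\text{for all } S .
\]
Combine this with $\rho(A)=1$, i.e.\ $\mathbf 1^\T A\mathbf 1\le n$ with equality iff $y\propto\mathbf 1$. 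Averaging the cut inequality over uniformly random $S$ of size $\floor{n/2}$ gives
\[
  \mathbf 1^\T A\mathbf 1\left(1-\tfrac{4\floor{n/2}\ceil{n/2}}{n(n-1)}\right)\ge -1 .
\]
For even $n$ the bracket equals $-\tfrac{1}{n-1}$, so $\mathbf 1^\T A\mathbf 1\le n-1$. I would then need to show $\mathbf 1^\T A\mathbf 1$ is forced to be large. Here I would use $\tr A^2 = \sum_{i,j} A_{ij}^2 = 1+\tr P^2$, the structure $A = yy^\T - P$ with $\tr P=1$, and irreducibility, to derive $\mathbf 1^\T A \mathbf 1 \ge n$ unless equality structures occur. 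I expect the hardest part to be extracting the exact rigidity, namely that all $A_{ij}$ are equal and $y=\mathbf 1/\sqrt n$. The route I would try first is to use a random balanced $S$ weighted by $y$, rather than uniform, so that the averaged inequality reads $\mathbf 1^\T A\mathbf 1$-type quantities in terms of $y^\T A y=1$. Equality in each averaging step should then force every $\eps$ of the chosen type to attain $\eps^\T A \eps = -1$, and a swapping argument on two coordinates, as in Case 1 of \cref{lem:maximizer}, should force $A_{ik}=A_{jk}$ for all $k\ne i,j$.

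\textbf{Step 3: conclusion.} Once $A=c(J_n-I_n)$, its eigenvalues are $c(n-1)$ and $-c$, and the positive one must equal $1$, giving $c=1/(n-1)$. Finally check parity. For even $n$, a balanced $\eps$ gives
\[
  \eps^\T M\eps = \frac{n-0}{n-1} > 1,
\]
which contradicts the hypothesis. So $n$ is odd, matching the computation in \cref{prop:equalities}.
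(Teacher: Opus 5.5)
Your proposal leaves the central claim of the lemma unproved: that all off-diagonal entries of $A=-M$ coincide, and that $y\propto\bm{1}$. Steps~1 and~2 record correct but weak consequences of the hypothesis. The rigidity is then deferred to an averaging-and-swapping argument that is never specified.

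The one concrete averaging you carry out cannot do the job.
\begin{itemize}
\item For odd $n$, your bracket equals $-1/n$, so the averaged cut inequality says only $\bm{1}^\T A\bm{1}\le n$. That already follows from $\rho(A)=1$. You get no information in exactly the case where rigidity must be proved, and averaging uniformly over any other cut size is weaker still.
\item For even $n$, you get $\bm{1}^\T A\bm{1}\le n-1$. However, the lower bound you hope to extract from $\tr A^2=1+\tr P^2$, $\tr P=1$ and irreducibility fails on the class. The star $K_{1,n-1}$ scaled by $1/\sqrt{n-1}$ is nonnegative, symmetric, irreducible and hollow, and has $1$ as its unique positive eigenvalue. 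Yet $\bm{1}^\T A\bm{1}=2\sqrt{n-1}<n-1$ for $n\ge6$. So the hypothesis $\eps^\T A\eps\ge-1$ must be used far more sharply than through a uniform average.
\end{itemize}
Likewise, a swapping argument in the style of Case~1 of \cref{lem:maximizer} needs pairs of sign vectors, differing in two coordinates, that \emph{both} attain $\eps^\T A\eps=-1$. Nothing in your argument produces such pairs.

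The missing ingredient is a distribution on near-balanced sign patterns, adapted to $M$, on which the hypothesis is tight at every support point. The paper obtains it from Ball and Prodromou's theorem via \cref{lem:extremal}.
\begin{itemize}
\item Take $u$ the positive Perron unit vector (your $y$) and $Q=M+uu^\T$ (your $P$). By \cref{lem:rounding}, $\qf{Q}{x}=\qf{M}{x}\le 1=\tr Q$ on $\cube\cap\range Q$.
\item \cref{lem:extremal} then gives a random $V$ supported on $\zube\cap\range Q$ with $\E[VV^\T]=\proj_{\range Q}$ and $\qf{M}{V}=1$ almost surely.
\item Apply \cref{lem:maximizer} at every support point, and combine with $\Pr(V_j=0)\ge u_j^2$ and $\abs{M}u=u$. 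This yields $\abs{M}\bm{1}=\bm{1}$ and $\range Q=\bm{1}^\perp$. It also shows that $V$ has a unique zero coordinate $J$, uniform on $\sset{1,\dots,n}$, with $(MV)_i=V_i\abs{M_{iJ}}$.
\item Computing $\E\norm{MV}^2$ in two ways gives $\tr(M^2)=n/(n-1)$. Cauchy--Schwarz against $\sum_{i\ne j}\abs{M_{ij}}=n$ then forces $\abs{M_{ij}}=1/(n-1)$.
\item Oddness of $n$ comes from $V\in\bm{1}^\perp$ having $n-1$ entries equal to $\pm1$.
\end{itemize}
Your idea of a ``random balanced $S$ weighted by $y$'' points in this direction: rounding such a $V$ coordinatewise gives a distribution on $\qube$ with $\E[\eps^\T M\eps]\ge1$. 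But producing such a distribution for an arbitrary $M$ in the class is essentially the content of \cref{thm:bp1}. A simple explicit weighting does not deliver it.

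Your Step~3 is fine once $A=c(J_n-I_n)$ has been established.
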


\begin{proof}
  By the Perron--Frobenius theorem, $1$ is the Perron--Frobenius eigenvalue of $-M$. Let $u$ be the corresponding unit eigenvector with positive coordinates. Set $Q=M+uu^\T$ and $K=\range Q$. Since $-1$ is the unique negative eigenvalue of $M$, we know that $Q$ is positive semidefinite and $u\in\ker Q$. Moreover, $\tr Q=1$ because $M$ is hollow.

  Applying \cref{lem:rounding} to $M$, we know that $\qf{M}{x}\le1$ for every $x\in\cube$. Since $u\in\ker Q$, this implies that $\qf{Q}{x}=\qf{M}{x}\le1=\tr Q$ for every $x\in\csec$. Then \cref{lem:extremal} yields a random vector $V$ supported on $\qsec$ such that
  \[
    \E[VV^\T]=\proj_K \quadand \qf{Q}{V}=\tr Q=1\quadas.
  \]
  Since $V$ is supported on $K$, we have $\qf{M}{V}=1$ almost surely. Hence every support point of $V$ achieves the maximum $1$ of the function $x\mapsto\qf{M}{x}$ over $\cube$.

  \begin{claim*}
    We have $\abs{M}\bm{1}=\bm{1}$ and $K=\bm{1}^\perp$. Almost surely, $V$ has a unique zero coordinate $J$, which is uniformly distributed on $\sset{1,\dots,n}$, and
    \[
      (MV)_i=V_i\abs{M_{iJ}} \text{ for every }i.
    \]
  \end{claim*}
  \begin{claimproof}[Proof of Claim]
    By \cref{lem:maximizer}, for every $i$,
    \begin{equation} \label{eq:mij-ineq}
      V_i(MV)_i\ge\sum_{j\colon V_j=0}\abs{M_{ij}} \quadas.
    \end{equation}
    Set $s_j=\sum_i\abs{M_{ij}}$. Summing over $i$ gives
    \begin{equation} \label{eq:s-ineq}
      1=\qf{M}{V}\ge\sum_{j\colon V_j=0}s_j \quadas.
    \end{equation}
    Since $K\subseteq u^\perp$, we have $\proj_K+uu^\T\preceq I_n$. Thus $\E[VV^\T]=\proj_K$ gives for every $j$ that
    \begin{equation} \label{eq:pos-p}
      \Pr(V_j=0)=1-(\proj_K)_{jj}\ge u_j^2>0.
    \end{equation}
    Consequently $s_j\le1$ for every $j$. But $\abs{M}u=u$, so
    \[
      \sum_j u_js_j=u^\T\abs{M}\bm{1}=u^\T\bm{1}=\sum_j u_j.
    \]
    Since every $u_j$ is positive, all $s_j=1$. Symmetry gives $\abs{M}\bm{1}=\bm{1}$, and irreducibility gives $u=\bm{1}/\sqrt n$.

    Since $s_j=1$ for every $j$, \eqref{eq:s-ineq} shows that $V$ has at most one zero coordinate almost surely. On the other hand, using \eqref{eq:pos-p}, we obtain
    \[
      \E[\abs{\dset{j}{V_j=0}}] = \sum_j \Pr(V_j = 0) = n - \tr \proj_K \ge \sum_j u_j^2 = 1.
    \]
    Thus $V$ has exactly one zero coordinate $J$ almost surely, and $n-\dim K = n - \tr \proj_K = 1$. Since $K\subseteq u^\perp$, we have $K=u^\perp=\bm{1}^\perp$. Moreover,
    \[
      \Pr(J=j)=1-(\proj_K)_{jj}=u_j^2=\frac1n.
    \]
    Since $s_J=1$ and $\qf{M}{V}=1$ almost surely, equality holds in \eqref{eq:s-ineq}, hence equality holds in \eqref{eq:mij-ineq} for every $i$. For $i\ne J$, since $V_i\in\sset{\pm1}$, this gives $(MV)_i=V_i\abs{M_{iJ}}$. For $i=J$, \cref{lem:maximizer} gives $(MV)_J=0=V_J\abs{M_{JJ}}$.
  \end{claimproof}

  Since $V \in K = \bm{1}^\perp$ and its $n-1$ nonzero coordinates are $\pm 1$, we conclude that $n$ is odd. The claim and $M_{JJ}=0$ give
  \[
    \E\norm{MV}^2=\E\sum_i M_{iJ}^2=\frac{1}{n}\tr(M^2).
  \]
  On the other hand, $\E[VV^\T]=\proj_K=I_n-uu^\T$ and $Mu=-u$, so
  \[
    \E\norm{MV}^2=\tr(M^2\proj_K)=\tr(M^2)-1.
  \]
  Thus $\sum_{i\ne j}M_{ij}^2=\tr(M^2)=n/(n-1)$. Since $\abs{M}\bm{1}=\bm{1}$, we also have $\sum_{i\ne j}\abs{M_{ij}}=n$. Equality holds in the Cauchy--Schwarz inequality over the off-diagonal entries of $\abs{M}$, forcing $\abs{M_{ij}}=1/(n-1)$ for every $i\ne j$. Therefore $M=(I_n-J_n)/(n-1)$.
\end{proof}

We now use \cref{lem:switching,lem:rigidity} to characterize the Gram matrix of an equality case.

\begin{proof}[Proof of \cref{thm:equalities}]
  Let $G$ be the Gram matrix of $v_1,\dots,v_n$. If $\rank G<d$, we apply \cref{thm:main} in the span of these vectors to obtain a signed sum with squared norm at least $2n-\rank G>2n-d$, a contradiction.

  Hereafter we assume that $\rank G=d$. Set $m=n-d$, let $M=G-I_n$, and decompose $M=Q+R$ as in the proof of \cref{lem:main}. In particular, $Q$ is positive semidefinite, $\tr Q$ is the sum of the positive eigenvalues of $M$, $R$ is negative semidefinite, the multiplicity of $-1$ as an eigenvalue of $R$ is $m$, and $\range Q \subseteq \ker R$. The hypothesis gives $\qf{M}{\eps}\le m$ for every $\eps\in\qube$. By \cref{lem:main}, some vector $\eps \in \qube$ satisfies $\qf{M}{\eps}\ge\tr Q\ge m$. Thus $\tr Q=m$, and $R$ has exactly $m$ nonzero eigenvalues, all equal to $-1$.

  Set $K = \range Q$. Applying \cref{lem:rounding} to $M$, we know that $\qf{M}{x}\le m$ for every $x\in\cube$. Since $K \subseteq\ker R$, this implies that $\qf{Q}{x}=\qf{M}{x}\le m=\tr Q$ for every $x\in\csec$. Then \cref{lem:extremal} yields a random vector $V$ supported on $\qsec$ such that
  \[
    \E[VV^\T]=\proj_K \quadand \qf{Q}{V}=\tr Q=m\quadas.
  \]

  Since $V$ is supported on $K$, we have $\qf{M}{V}=m$ almost surely. Hence every support point of $V$ achieves the maximum $m$ of $x\mapsto\qf{M}{x}$ over $\cube$.

  \begin{claim*}
    Let $q\in\R^n$ be defined by $q_i=Q_{ii}$. Then $q$ is nonnegative, $q\ge\abs{M}q$, and $q_i>0$ whenever the $i$-th row of $M$ is nonzero.
  \end{claim*}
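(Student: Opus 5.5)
The plan is to average the pointwise inequality of \cref{lem:maximizer} over the random vector $V$ just produced by \cref{lem:extremal}. The point is that both sides of that inequality have clean expectations in terms of $Q$ and $R$.

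Nonnegativity is immediate: $Q$ is positive semidefinite, so $q_i=Q_{ii}\ge0$.

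For $q\ge\abs{M}q$, I would proceed as follows.

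\begin{itemize}
\item Every support point of $V$ is a maximizer of $x\mapsto\qf{M}{x}$ over $\cube$ lying in $\zube$. So \cref{lem:maximizer} gives, almost surely and for every $i$,
\[
  V_i(MV)_i\ge\sum_{j\colon V_j=0}\abs{M_{ij}}.
\]

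\item The expectation of the left side is $(M\E[VV^\T])_{ii}=(M\proj_K)_{ii}$. Since $K=\range Q$, we have $Q\proj_K=Q$. Since $K\subseteq\ker R$, we have $R\proj_K=0$. Hence $M\proj_K=Q$, and the left side has expectation exactly $q_i$.

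\item The expectation of the right side is $\sum_j\abs{M_{ij}}\Pr(V_j=0)$. Because $V$ is supported on $\zube$, we have $\Pr(V_j=0)=1-\E[V_j^2]=1-(\proj_K)_{jj}$.

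\item It remains to show $1-(\proj_K)_{jj}\ge q_j$, and this is the one step that needs a small idea. All nonzero eigenvalues of $R$ equal $-1$, so $-R$ is the orthogonal projection onto $\range R$. This subspace is orthogonal to $K$, because $\range Q\subseteq\ker R$. Therefore $\proj_K+(-R)\preceq I_n$, which gives $1-(\proj_K)_{jj}\ge -R_{jj}$.

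\item Since $M$ is hollow, $0=M_{jj}=Q_{jj}+R_{jj}$, so $-R_{jj}=q_j$. Combining everything yields $q_i\ge\sum_j\abs{M_{ij}}q_j$, which is $q\ge\abs{M}q$.
\end{itemize}

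For positivity, suppose $q_i=0$.

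\begin{itemize}
\item The $i$-th row of $Q$ vanishes, because $Q$ is positive semidefinite with zero diagonal entry $Q_{ii}$.
\item Also $R_{ii}=-q_i=0$, and $-R$ is positive semidefinite, so the $i$-th row of $R$ vanishes as well.
\item Hence the $i$-th row of $M=Q+R$ is zero.
\end{itemize}

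Taking the contrapositive, $q_i>0$ whenever the $i$-th row of $M$ is nonzero.
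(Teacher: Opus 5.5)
Your proof is correct. The nonnegativity and the inequality $q\ge\abs{M}q$ are argued exactly as in the paper. You average the inequality from \cref{lem:maximizer} and identify the left side through $M\proj_K=Q$. You then bound $\Pr(V_j=0)=1-(\proj_K)_{jj}\ge -R_{jj}=q_j$ using $\proj_K+(-R)\preceq I_n$.

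You differ from the paper on the positivity statement. The paper uses $q\ge\abs{M}q$ to propagate $q_i=0$ to the whole connected component $C$ of $i$ in the support graph. From this it deduces that the principal submatrix of $Q$ on $C$ vanishes. It then concludes that the principal submatrix of $M$ on $C$ is hollow and negative semidefinite, hence zero, which forces row $i$ to vanish.

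You work at the single index instead. Hollowness gives $R_{ii}=-Q_{ii}=0$, and a zero diagonal entry of the positive semidefinite matrices $Q$ and $-R$ kills the corresponding rows. So the $i$-th row of $M=Q+R$ is zero.

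Your route is shorter. It shows that this part of the claim is a purely algebraic consequence of the decomposition $M=Q+R$ with $Q\succeq0\succeq R$ and $M$ hollow. It uses neither $q\ge\abs{M}q$ nor the extremality of $V$. The paper's component-level argument uses the same ingredients on a block and reaches the same conclusion with an unnecessary propagation step.
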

  \begin{claimproof}[Proof of Claim]
    By \cref{lem:maximizer}, for every $i$,
    \[
      V_i(MV)_i\ge\sum_{j\colon V_j=0}\abs{M_{ij}}\quadas.
    \]
    Taking expectations gives
    \begin{equation} \label{eq:exp-ineq}
      \sum_j M_{ij}\E[V_iV_j]\ge\sum_j\abs{M_{ij}}\Pr(V_j=0).
    \end{equation}
    Since $\E[VV^\T]=\proj_K$ and $M\proj_K=Q$, the left hand side of \eqref{eq:exp-ineq} is $q_i$. For the right hand side, set $L=\range R$. Since every nonzero eigenvalue of $R$ is $-1$, we have $-R=\proj_L$. Since $K$ and $L$ are orthogonal, we have $\proj_K+\proj_L\preceq I_n$. Thus $\E[VV^\T]=\proj_K$ and $V_j\in\sset{0,\pm1}$ give,
    \[
      \Pr(V_j=0)=1-(\proj_K)_{jj}\ge(\proj_L)_{jj}=-R_{jj}=q_j, \text{ for every }j,
    \]
    where the last equality uses that $M$ is hollow. Consequently \eqref{eq:exp-ineq} gives $q\ge\abs{M}q$.

    The vector $q$ is nonnegative because $Q$ is positive semidefinite. Suppose that $q_i=0$, and let $C$ be the connected component containing $i$ in the support graph of $M$. The inequality $q\ge\abs{M}q$ forces $q$ to vanish on $C$. Since $Q$ is positive semidefinite, its principal submatrix on $C$ is zero. Thus the corresponding principal submatrix of $M$ is negative semidefinite, because $R$ is negative semidefinite. It is also hollow, so it must be zero. In particular, the $i$-th row of $M$ is zero.
  \end{claimproof}

  By the claim and \cref{lem:switching}, after switching some of the vectors $v_1,\dots,v_n$, we may assume that $M=-\abs{M}$. Let $C_0$ be the set of isolated vertices in the support graph of $M$, and let $C_1,\dots,C_{m'}$ be its nontrivial connected components.

  The matrix $M$ is block diagonal with respect to $C_0,\dots,C_{m'}$. Let $M_i$ be its diagonal block on $C_i$. The block $M_0$ is zero. For each $i\in\sset{1,\dots,m'}$, the matrix $-M_i$ is nonnegative and irreducible. By the Perron--Frobenius theorem, its largest eigenvalue is positive and simple. Since every positive eigenvalue of $-M$ is $1$, this is the only positive eigenvalue of $-M_i$. Hence each $M_i$ has exactly one negative eigenvalue, equal to $-1$. As $M$ has $m$ negative eigenvalues, we have $m'=m$.

  Each $M_i$ with $i\in\sset{1,\dots,m}$ is hollow and has an eigenvalue of $-1$. Thus \cref{lem:main} gives signs $\eps_i\in\sset{\pm1}^{C_i}$ with $\qf{M_i}{\eps_i}\ge1$. Since $M_0$ is zero and $\qf{M}{\eps}\le m$ for every $\eps\in\qube$, we conclude that $\qf{M_i}{\eps}\le1$ for every $\eps\in\sset{\pm1}^{C_i}$. Applying \cref{lem:rigidity} to each $M_i$ with $i\in\sset{1,\dots,m}$, we find that its order $n_i$ is odd and $M_i=(I_{n_i}-J_{n_i})/(n_i-1)$. Thus the vectors indexed by $C_i$ form a centered regular simplex in an $(n_i-1)$-dimensional subspace. The vectors indexed by $C_0$ are orthonormal, and the subspaces corresponding to distinct blocks are orthogonal. Since $\rank G=d$, these subspaces span $\R^d$.
\end{proof}

\section{Concluding remarks} \label{sec:discussion}

By \cref{thm:equalities}, equality in \cref{thm:main} requires $n-d$ mutually orthogonal simplex blocks, each of dimension at least $2$. Thus $2(n-d) \le d$, or equivalently $n\le 3d/2$, whenever the lower bound in \cref{thm:main} is attained. Since the space of configurations of $n$ unit vectors in $\R^d$ is compact, it follows that
\[
  C(\R^d,n)>\sqrt{2n-d} \qquad\text{for }n>3d/2.
\]
In the plane, Ambrus and Nietert \cite[Propositions 3 and 5]{AN19} determined the exact value:
\[
  C(\R^2,n)=\frac{1}{\sin(\pi/(2n))}, \text{ for every }n.
\]
In particular, $C(\R^2,3)=2$ agrees with \cref{thm:main}, while $C(\R^2,4)=1/\sin(\pi/8)>\sqrt{6}$. Determining $C(\R^d,n)$, and describing its extremizers, for $n>3d/2$ is the main remaining question.

Among dimensions $d\ge3$, the first unresolved case is $(d,n)=(3,5)$, which Fu, Wang, and Yan \cite[Section 2]{FWY23} identified as difficult. For $(d,n)=(3,6)$, the six unit vectors $(1,\pm1,0)/\sqrt2$, $(1,0,\pm1)/\sqrt2$, and $(0,1,\pm1)/\sqrt2$ have largest signed-sum norm $\sqrt{10}$. We conjecture that $C(\R^3,6)=\sqrt{10}$.

Finally, Spencer's bound on balancing unit vectors $c(\R^d,n)\le\sqrt d$ is sharp when $n\ge d$ and $n-d$ is even: add pairs of identical vectors to an orthonormal basis \cite{S81}. In the plane, $c(\R^2,n)=\sqrt2$ for even $n$, while Swanepoel \cite{S00} showed that $c(\R^2,n)=1$ for odd $n$. It would be interesting to determine the optimal constant when $n \ge d \ge 3$ and $n-d$ is odd.

\section*{Acknowledgements}

We thank Alexandr Polyanskii for early discussions with the first author. Research was partially completed while the first two authors were visiting the Institute for Mathematical Sciences, National University of Singapore in August 2026. During the preparation of the current manuscript, it came to our attention that Pinasco \cite{P26} independently resolved the problem for $n = d+1$ using seemingly different techniques. Nevertheless, our result is more general and is heavily inspired by the work of Ball and Prodromou \cite{BP09}.

\bibliographystyle{plain}
\bibliography{vaaler}

\end{document}